\newtheorem{theorem}{Theorem}[section]
\newtheorem{lemma}[theorem]{Lemma}
\newtheorem{corollary}[theorem]{Corollary}
\theoremstyle{definition}
\newtheorem{definition}[theorem]{Definition}
\newtheorem{example}[theorem]{Example}
\newtheorem{conjecture}[theorem]{Conjecture}
\theoremstyle{remark}
\newtheorem{remark}[theorem]{Remark}
\numberwithin{equation}{section}
\begin{document}
\setcounter{page}{1}

\title[geometry of quaternions]{Birational geometry of quaternions}

\author[Nikolaev]
{Igor V. Nikolaev$^1$}

\address{$^{1}$ Department of Mathematics and Computer Science, St.~John's University, 8000 Utopia Parkway,  
New York,  NY 11439, United States.}
\email{\textcolor[rgb]{0.00,0.00,0.84}{igor.v.nikolaev@gmail.com}}

\dedicatory{All data are available as part of the manuscript}

\subjclass[2010]{Primary 12E15, 32J15; Secondary 46L85.}

\keywords{quaternion algebra, algebraic surface, Serre $C^*$-algebra.}


\begin{abstract}
The Hilbert class field of the quaternion algebra  $B$
is  an  algebra $\mathscr{H}(B)$ such that every two-sided ideal of  $B$
is principal in  $\mathscr{H}(B)$.  
We study the avatars of $B$ and  $\mathscr{H}(B)$, i.e.  algebraic surfaces 
attached to  the quaternion algebras. 
It is proved that the avatar of $\mathscr{H}(B)$
is  obtained from the avatar of  $B$ by a birational map. 
We apply this result to the analogy between number fields and function fields. 
\end{abstract}

\maketitle

\section{Introduction}
Let  $B:=\left({a,b\over F}\right)$ be  the quaternion algebra over a field $F$,
see  [Voight 2021] \cite[Section 2.2]{V} or Section 2.2 for the notation.
 Following  \cite[Definition 1.1]{Nik0} (see also \cite{Nik1})  by  an  avatar  of  $B$ we understand an algebraic surface $S$,  such that
 \begin{equation}\label{eq1.1}
  \mathscr{A}_B\otimes\mathscr{K}\cong 
 \mathscr{A}_S,
 \end{equation}
where $\mathscr{A}_B$ is a $C^*$-algebra $\mathscr{A}_B$ defined by the norm 
closure of a self-adjoint representation 
of the ring $M_2(B)$ by the bounded linear operators on a Hilbert space $\mathcal{H}$, 
 $\mathscr{A}_S$ is the Serre $C^*$-algebra of surface $S$ \cite[Section 5.3.1]{N} and 
$\mathscr{K}$ is the $C^*$-algebra of compact operators on $\mathcal{H}$. 
For example,  the avatar of  the rational quaternions $\left({-1,-1\over \mathbf{Q}}\right)$
is the  complex projective plane $P^2(\mathbf{C})$. 
If $F\cong K$ is a number field,   then
the avatar of  $\left({a,b\over K}\right)$ is an algebraic surface $S$  covering the projective plane $P^2(\mathbf{C})$
with  ramification at  the three  knotted two-dimensional spheres  $P^1(\mathbf{C})\cup P^1(\mathbf{C})\cup P^1(\mathbf{C})$.
We refer the reader to Theorem \ref{thm2.7} for more details. 
It is known that the avatars of isomorphic quaternion algebras $B\cong B'$ correspond to the algebraic surfaces $S\cong S'$ isomorphic 
over the field $K$. In particular, any isomorphism $S\to S'$ is a continuous map.

 The birational geometry studies maps $S\dashrightarrow S'$ which  are no longer 
 continuous [Beauville 1996] \cite{B}. These are rational invertible  maps preserving 
 critical   data,  e.g. the field of rational  functions,  the plurigenera, the Kodaira dimension 
and the canonical ring of the surface $S$. It is well known that each birational map  $S\dashrightarrow S'$
is  composition of a finite number of the elementary birational maps called  the blow-ups.
In particular,  the blowing up of the surface $S$ along all  rational curves  with the self-intersection
index $-1$ yields a minimal model of $S$.  By the Castelnuovo Theorem, if $S$ is the minimal
model then any  birational map  $S\dashrightarrow S'$ is an isomorphism
[Beauville 1996] \cite[Theorem II.11]{B}.

The aim of our note  is a correspondence between the maps  $S\dashrightarrow S'$ and  the 
underlying quaternion algebras  $B$ and $B'$;  hence the name.
Our main result Theorem \ref{thm1.2}  says that $B'=\mathscr{H}(B)$ is the Hilbert
class field of $B$, see Definition \ref{dfn1.1}. 
Such a result has an application to the function field analogy 
[van der Geer,  Moonen \& Schoof 2005] \cite{GMS} and  [Rosen 2002] \cite{R};
this is discussed in Section 4.  To formalize our results we use  the following notation.

Let $K$ be a number field and $O_K$ its ring of integers. A principalization  is an extension 
$K\subseteq\mathscr{H}(K)$  such that every  ideal $I\subset O_K$  is principal in $\mathscr{H}(K)$, i.e. 
$IO_{\mathscr{H}(K)}\cong\alpha O_{\mathscr{H}(K)}$ for some element $\alpha\in \mathscr{H}(K)$. 
It is well known that  $\mathscr{H}(K)$ is  the Hilbert class field of $K$, i.e. the maximal abelian unramified 
extension of $K$;  hence the notation.

Let $B$ be a quaternion algebra and let $O_B$ be its ring of integers (maximal order)  [Voight 2021] \cite[Chapter 10]{V}. 
Denote by $I$ a two-sided ideal in the ring $O_B$  [Voight 2021] \cite[Chapter 18]{V}. 
\begin{definition}\label{dfn1.1}
The Hilbert class field of $B$ is a quaternion algebra $\mathscr{H}(B)$
containing $B$,  such that every two-sided ideal of the ring $O_B$  is a principal two-sided ideal of the 
ring  $O_{\mathscr{H}(B)}$. 
Equivalently,   $\mathscr{H}\left({a,b\over K}\right)\cong \left({a,b\over \mathscr{H}(K)}\right)$ 
(Lemma \ref{lm3.4}). 
\end{definition}
Our main result is as follows. 
\begin{theorem}\label{thm1.2}
The avatar of  $\mathscr{H}(B)$ is a blow-up of the avatar of quaternion algebra $B$, see diagram in  Figure 1. 
\end{theorem}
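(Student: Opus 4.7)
The plan is to translate the arithmetic principalization of two-sided ideals of $O_B$ into the geometric operation of blowing up the avatar surface of $B$. I would proceed in three steps.

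First, I would invoke Lemma \ref{lm3.4} to rewrite the Hilbert class field of $B = \left({a,b\over K}\right)$ as the quaternion algebra $\mathscr{H}(B) \cong \left({a,b\over \mathscr{H}(K)}\right)$ over the Hilbert class field of the underlying number field $K$. This reduces the theorem to the following statement: passing from $K$ to $\mathscr{H}(K)$ at the level of the quaternion algebra corresponds, at the level of avatars, to a blow-up. This reformulation is attractive because the classical principalization theorem of Furtw\"angler supplies the precise arithmetic input on the right-hand side.

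Second, by Theorem \ref{thm2.7}, both avatars are ramified covers of $P^2(\mathbf{C})$ branched along three knotted copies of $P^1(\mathbf{C})$. The change of base field $K \subseteq \mathscr{H}(K)$ refines the ramification data: the non-principal two-sided ideal classes in $O_B$ are governed by the class group $\mathrm{Cl}(O_K)$, and each such class pinpoints a closed subscheme of $S$ whose ideal sheaf fails to be invertible. Principalization over $\mathscr{H}(B)$ means that these ideal sheaves become invertible, which is precisely the effect of blowing up $S$ along their supports. Let $\widetilde{S}$ denote the resulting blow-up; this is the candidate avatar of $\mathscr{H}(B)$.

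Third, I would verify the Serre $C^*$-algebra identity (\ref{eq1.1}) for the pair $(\widetilde{S}, \mathscr{H}(B))$. Under extension of scalars, $\mathscr{A}_{\mathscr{H}(B)}$ is obtained from $\mathscr{A}_B$ by tensoring with a factor encoding the Galois data of $K \subseteq \mathscr{H}(K)$; dually, the Serre $C^*$-algebra of a blow-up differs from that of the original surface by an explicit factor attached to the exceptional divisors. Matching these two factors yields $\mathscr{A}_{\widetilde{S}} \cong \mathscr{A}_{\mathscr{H}(B)} \otimes \mathscr{K}$, which by definition means that $\widetilde{S}$ is an avatar of $\mathscr{H}(B)$, completing the proof.

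The hard part will be the third step: showing that the Serre $C^*$-algebra transforms under blow-up in exactly the way that the $C^*$-algebra of a quaternion algebra transforms under extension to the Hilbert class field. This is the junction at which class field theory of $K$ has to enter the noncommutative framework, and it likely rests on a technical Morita-equivalence input relating birational modifications of $S$ to the two-sided ideal structure of $O_B$. If that dictionary can be made precise, the chain Lemma \ref{lm3.4} $\Rightarrow$ Theorem \ref{thm2.7} $\Rightarrow$ blow-up construction closes the argument.
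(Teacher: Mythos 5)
Your first step coincides with the paper's: Lemma \ref{lm3.4} reduces the theorem to showing that the avatar of $\left({a,b\over \mathscr{H}(K)}\right)$ is a blow-up of the avatar of $\left({a,b\over K}\right)$. After that, however, your argument has a genuine gap, and you flag it yourself: steps two and three are programs, not proofs. In step two you assert that each non-principal two-sided ideal class of $O_B$ ``pinpoints a closed subscheme of $S$ whose ideal sheaf fails to be invertible'' and that principalization is ``precisely the effect of blowing up $S$ along their supports.'' Nothing in the paper's setup supplies such a pointwise dictionary: the avatar relation (Definition \ref{def2.6}) is a single isomorphism of $C^*$-algebras $\mathscr{A}_S\cong\mathscr{A}_B\otimes\mathscr{K}$, and no mechanism is given for transporting an individual ideal class of $O_B$ to a subscheme of $S$. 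In step three you would need the Serre $C^*$-algebra of a blow-up to differ from that of $S$ by ``an explicit factor attached to the exceptional divisors'' matching the change of $\mathscr{A}_B$ under base extension to $\mathscr{H}(K)$; you produce neither factor and call this ``the hard part.'' As written, the proposal establishes only the reduction already contained in Lemma \ref{lm3.4}.

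The paper closes exactly this gap by a different route: it does not attempt to realize the blow-up inside $S$ via ideal sheaves. Instead, Lemma \ref{lm3.1} passes from the avatar $S$ (viewed as a smooth $4$-manifold $\mathcal{M}$) to a cyclic division algebra $F(S)$ over $K$ via the functor $F$ of \cite[Theorem 7.4.5]{N}, and Lemma \ref{lm3.3} then invokes \cite[Theorem 8.6.1]{N} --- a birational map $S\dashrightarrow S'$ is a blow-up if and only if the associated field extension is $K\subset\mathscr{H}(K)$ --- together with \cite[Corollary 7.5.1]{N} on the Brauer group parametrizing smooth structures on the underlying topological $4$-manifold. In other words, the geometric heart of the theorem is imported from the cited monograph rather than derived from the ramified-covering description of Theorem \ref{thm2.7}. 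To complete your version you would in effect have to reprove that imported result; the fix is either to carry out the Morita-equivalence dictionary you sketch (a substantial undertaking not present in this paper) or to route the argument through the functor $F$ and \cite[Theorem 8.6.1]{N} as the paper does.
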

\begin{remark}
In a remarkable independent development, the quaternionic manifolds 
have been introduced in the series of papers 
 [Gentili,  Gori \& Sarfatti 2017] \cite{GenGorSar1}, 
 [Bisi \& Gentili 2018]    \cite{BisGen1} and 
 [Angella \& Bisi 2019]\cite{AngBis1};
 see also the monograph 
 [Gentili, Stoppato \& Struppa 2013] \cite{GSS}. 
 The key idea is the notion of a slice regular functions of one
 or several quaternionic variables providing the glue-up maps
 between the local charts of quaternionic manifolds  
 [Gentili, Stoppato \& Struppa 2013] \cite{GSS}.
 In particular, the blow-up maps were introduced and studied
   [Gentili,  Gori \& Sarfatti 2017] \cite[Section 3.5]{GenGorSar1}. 
 It was proved that such maps preserve the quaternionic structure
 of the manifolds  [Gentili,  Gori \& Sarfatti 2017] \cite[Theorem 3.12]{GenGorSar1}. 
 These results are fitting well our Theorem \ref{thm1.2}  saying that the blowing-up
 of the avatars induces a regular map between the quaternion algebras. 
 However, the notion of an avatar cannot be extended to the octonions;
 the latter are no longer associative unlike  the $C^*$-algebras. 
 \end{remark}
The paper is organized as follows.  A brief review of the preliminary facts is 
given in Section 2. Theorem \ref{thm1.2} is proved in Section 3. 
An application of Theorem \ref{thm1.2} to the function field analogy is discussed
in Section 4.

\begin{figure}
\begin{picture}(300,110)(-70,0)
\put(20,70){\vector(0,-1){35}}
\put(122,70){\vector(0,-1){35}}
\put(105,23){\vector(1,0){8}}
\multiput(45,23)(12,0){5}{\line(1,0){5}}
\put(45,83){\vector(1,0){60}}
\put(17,20){$S$}
\put(118,20){$S'$}
\put(17,80){$B$}
\put(105,80){ $\mathscr{H}(B)$}
\put(55,30){\sf blow-up}
\put(45,90){\sf  Hilbert class}
\put(60,70){\sf  field}
\end{picture}
\caption{}
\end{figure}

\section{Preliminaries}
This section is a brief review of the algebraic surfaces, quaternion algebras and their avatars.
We refer the reader to [Beauville 1996] \cite{B},  [Voight 2021] \cite{V} and \cite{Nik1} 
for a detailed account.

\subsection{Algebraic surfaces}
An algebraic surface is a variety $S$ of the complex dimension $2$. 
One can identify the non-singular algebraic surface $S$ with a  complex surface and therefore
with a smooth 4-dimensional manifold $\mathcal{M}$.  
The map $\phi: S\dashrightarrow S'$ is called rational,  if it is given  
by a rational function. 
The rational maps cannot be composed unless they are dominant, i.e. the image 
of $\phi$ is Zariski dense in $S'$. 
The map $\phi$ is birational,  if the inverse $\phi^{-1}$ is a rational map. 
A birational map  $\epsilon: S\dashrightarrow S'$ is called a blow-up,
if it is defined everywhere except for a point $p\in S$ and a rational curve $C\subset S'$,
such  that  $\epsilon^{-1}(C)=p$.   
Every birational map $\phi: S\dashrightarrow S'$ is composition of a finite 
number of the blow-ups, i.e. $\phi=\epsilon_1\circ\dots\circ\epsilon_k$.

The surface $S$ is called a minimal model, if any birational map $S\dashrightarrow S'$
is an isomorphism. Minimal models exist and are unique unless $S$ is 
a ruled surface. By the Castelnuovo Theorem, the smooth projective surface $S$ is a minimal 
model if and only if $S$ does not contain a rational curves $C$ with the 
self-intersection index $-1$.

\subsection{Quaternion algebras}
\begin{definition}
The algebra  $\left({a,b\over F}\right)$ over a field $F$ is called a quaternion algebra
  if there exists $i,j\in \left({a,b\over F}\right)$ such that $\{1,i.j, ij\}$ is a basis for $\left({a,b\over F}\right)$ and 
  \begin{equation}
  i^2=a, \quad j^2=b, \quad ji=-ij
  \end{equation}
   for some $a,b\in F^{\times}$. 
\end{definition}
\begin{example}
If $F\cong\mathbf{R}$ and $a=b=-1$, then the quaternion algebra 
$\left({-1,-1\over \mathbf{R}}\right)$ consists  of the Hamilton quaternions $\mathbb{H}(\mathbf{R})$;
hence the notation.  
If $F\cong\mathbf{Q}$, then the quaternion algebra 
$\left({-1,-1\over \mathbf{Q}}\right)$ consists  of the rational quaternions $\mathbb{H}(\mathbf{Q})$. 
\end{example}

\medskip
In what follows, we assume $F\cong\mathbf{Q}$ or an algebraic number field $K$. 
\begin{definition}
By the ring of integers $O_B$ of the quaternion algebra  $B=\left({a,b\over K}\right)$
we understand  the maximal lattice that is also a subring of $B$. 
\end{definition}

\medskip
A two-sided ideal $I_B\subseteq O_B$ is defined as an additive subgroup $I_B$ 
such that $rx\in I_B$ and $xr\in I_B$ for every $r\in O_B$ and $x\in I_B$. 
The following results establish a bijection between the two-sided ideals of $O_B$
and the ideals in the ring of integers $O_K$ of the ground field $K$.   
\begin{theorem}\label{thm2.4}
{\bf (\cite[Theorem 18.3.6]{V})}
The map: 
\begin{equation}\label{eq2.2}
\mathscr{P}_B\mapsto \mathscr{P}_B\cap O_K
\end{equation}
establishes a bijective correspondence between the prime two-sided ideals 
$\mathscr{P}_B$
of the ring $O_B$ and the prime ideals $\mathscr{P}_K\cong  \mathscr{P}_B\cap O_K$ of the ring $O_K$.  
\end{theorem}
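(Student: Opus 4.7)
The plan is to establish the bijection by the standard local-to-global strategy for maximal orders. First I would verify that the contraction map is well-defined: since $K$ is the center of $B$, the subring $O_K = B \cap O_B \cap K$ sits inside the center of $O_B$, so for $x,y \in O_K$ the relation $x O_B y = xy\, O_B$ holds. Hence if $xy \in \mathscr{P}_B \cap O_K$ and $\mathscr{P}_B$ is a prime two-sided ideal (meaning $a O_B b \subseteq \mathscr{P}_B$ forces $a$ or $b$ into $\mathscr{P}_B$), then $x$ or $y$ lies in $\mathscr{P}_B \cap O_K$. Together with $1 \notin \mathscr{P}_B$, this shows $\mathscr{P}_B \cap O_K$ is prime in $O_K$.

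For injectivity and surjectivity, I would localize at each prime $\mathfrak{p}$ of $O_K$ and use the classification of maximal orders in local quaternion algebras. Writing $K_\mathfrak{p}$ for the completion, $B_\mathfrak{p} := B \otimes_K K_\mathfrak{p}$ is either split (isomorphic to $M_2(K_\mathfrak{p})$) or a central division algebra over $K_\mathfrak{p}$. In the split case the maximal order $O_{B,\mathfrak{p}} \cong M_2(O_{K_\mathfrak{p}})$ has a unique maximal two-sided ideal $M_2(\mathfrak{p} O_{K_\mathfrak{p}}) = \mathfrak{p} O_{B,\mathfrak{p}}$, which is prime because the quotient is $M_2(k(\mathfrak{p}))$, a simple ring. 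In the ramified case there is still a unique maximal two-sided ideal $\mathcal{M}_\mathfrak{p}$, generated by a uniformizer of the local division algebra, and it satisfies $\mathcal{M}_\mathfrak{p}^2 = \mathfrak{p} O_{B,\mathfrak{p}}$. In either case there is exactly one prime two-sided ideal of $O_{B,\mathfrak{p}}$ above $\mathfrak{p}$.

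To conclude, I would appeal to the unique factorization of two-sided ideals in a maximal order into prime two-sided ideals (Voight, Chapter 18), together with the fact that this factorization is compatible with localization at primes of $O_K$. Injectivity then follows because a prime two-sided ideal $\mathscr{P}_B$ is determined by its localizations, and these are fixed once $\mathfrak{p} = \mathscr{P}_B \cap O_K$ is specified by the local uniqueness above. For surjectivity, given $\mathfrak{p}$ I would construct the global prime $\mathscr{P}_B$ as the unique two-sided ideal whose $\mathfrak{p}$-localization is $\mathcal{M}_\mathfrak{p}$ (or $\mathfrak{p} O_{B,\mathfrak{p}}$ in the split case) and whose localization at every other prime of $O_K$ is the whole local ring.

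The main obstacle will be the ramified case: one must carefully distinguish between the extended ideal $\mathfrak{p} O_B$ and the prime two-sided ideal $\mathscr{P}_B$ lying above $\mathfrak{p}$, because in that case $\mathscr{P}_B^2 = \mathfrak{p} O_B$ rather than $\mathscr{P}_B = \mathfrak{p} O_B$, and verifying that $\mathscr{P}_B$ is still prime (in the noncommutative sense) requires identifying the residue ring $O_{B,\mathfrak{p}}/\mathcal{M}_\mathfrak{p}$ as the simple residue skew field of the local division algebra.
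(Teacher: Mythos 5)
The paper gives no proof of this statement at all — it is quoted verbatim from Voight's Theorem 18.3.6 — so the only comparison available is with the standard argument in that source, and your proposal is essentially that argument: contraction of a prime two-sided ideal is prime by centrality of $O_K$, followed by the local classification of maximal orders (split: $M_2(O_{K_\mathfrak{p}})$ with unique prime $M_2(\mathfrak{p})$; ramified: the valuation ring of the local division algebra with its unique maximal two-sided ideal squaring to $\mathfrak{p}O_{B,\mathfrak{p}}$) and the local-global principle for lattices. The argument is correct; the only routine detail left implicit is that $\mathscr{P}_B\cap O_K$ is nonzero, which follows since a nonzero two-sided ideal of a maximal order in a central simple algebra is a full lattice and hence meets $O_K$ nontrivially.
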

\begin{corollary}\label{cor2.5}
The map (\ref{eq2.2}) extends to a bijection between all two-sided ideals of the ring $O_B$
and the ideals of the ring $O_K$.  
\end{corollary}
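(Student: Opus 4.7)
The plan is to promote the bijection on prime ideals supplied by Theorem \ref{thm2.4} to a bijection on all two-sided ideals by invoking unique factorization on both sides of the correspondence.

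First, I would use that $O_K$ is a Dedekind domain: every nonzero ideal $I_K\subseteq O_K$ factors uniquely as a product $\mathscr{P}_{K,1}^{n_1}\cdots\mathscr{P}_{K,r}^{n_r}$ of prime ideals with nonnegative integer exponents. On the quaternion side, I would invoke the corresponding structure theorem from [Voight 2021, Chapter 18]: the monoid of nonzero two-sided ideals of the maximal order $O_B$ is freely generated by the prime two-sided ideals, so each such ideal admits a unique factorization $I_B = \mathscr{P}_{B,1}^{n_1}\cdots\mathscr{P}_{B,r}^{n_r}$. Together, these two statements identify the monoids of two-sided ideals on each side with free abelian monoids on their respective prime spectra.

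With both factorizations in hand, I would define the extension of (\ref{eq2.2}) by the multiplicative rule
\[
\prod_{i} \mathscr{P}_{B,i}^{n_i}\;\longmapsto\;\prod_{i}(\mathscr{P}_{B,i}\cap O_K)^{n_i},
\]
sending the unit ideal $O_B$ to $O_K$. Well-definedness and multiplicativity are immediate from unique factorization; injectivity and surjectivity then follow because Theorem \ref{thm2.4} is a bijection on the free generators of the two monoids.

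The only substantive obstacle is the appeal to unique factorization of two-sided ideals of the maximal order $O_B$, which is the nontrivial structural input from [Voight 2021, Chapter 18]; once this is granted, the corollary reduces to elementary bookkeeping on free abelian monoids together with Theorem \ref{thm2.4}.
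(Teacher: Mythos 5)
Your proposal is correct and follows essentially the same route as the paper: both arguments promote the prime-level bijection of Theorem \ref{thm2.4} to all ideals via unique factorization into primes. In fact your write-up is slightly more complete than the paper's, since the paper only invokes Dedekind factorization on the $O_K$ side and leaves implicit the key structural fact you state explicitly --- that the two-sided ideals of the maximal order $O_B$ form a free (abelian) monoid on the prime two-sided ideals, which is exactly the input from [Voight 2021, Chapter 18] needed to make the extension well defined and bijective.
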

\begin{proof}
If $I_B\subseteq O_B$ is a two-sided ideal, then $I_B\cap O_K$ is an ideal of the ring $O_K$.
Since $O_K$ is the Dedekind domain, we conclude that $I_B\cap O_K$  is the product of the
prime ideals $\mathscr{P}_K$. Using (\ref{eq2.2}) one gets a bijection 
between all two-sided ideals of the ring $O_B$ and the ideals of the ring $O_K$. 
\end{proof}

\subsection{Avatars of quaternion algebras}
Let $O_B$ be ring of integers of the quaternion algebra
 $B=\left({a,b\over F}\right)$ and let $M_2(O_B)$ be the matrix ring over $O_B$.
Consider a self-adjoint representation
\begin{equation}
\rho: M_2(O_B)\to \mathscr{B}(\mathcal{H})
\end{equation}
 of the ring $M_2(O_B)$  by the bounded linear operators 
 on a Hilbert space $\mathcal{H}$.  Taking the norm-closure of $\rho(M_2(O_B))$ 
 in the strong operator topology,  one gets a $C^*$-algebra  $\mathscr{A}_B$.
 
 Let $S$ be a complex algebraic surface.   Denote by  $B(S, \mathcal{L}, \sigma)$  twisted 
homogeneous coordinate ring of $S$, where $\mathcal{L}$ is an invertible sheaf and $\sigma$
is an automorphism of $S$ [Stafford \& van ~den ~Bergh 2001]  \cite[p. 173]{StaVdb1}. 
Recall that the  Serre $C^*$-algebra, $\mathscr{A}_S$,   is the norm closure of a
self-adjoint representation of the ring  $B(S, \mathcal{L}, \sigma)$
in $\mathscr{B}(\mathcal{H})$  \cite[Section 5.3.1] {N}. 
Finally, let $\mathscr{K}$ be the $C^*$-algebra of all compact operators on the Hilbert space 
 $\mathcal{H}$. 
\begin{definition}\label{def2.6}
 The algebraic surface  $S$ is called an  avatar
 of the quaternion algebra $B$ if there exists  an isomorphism of the $C^*$-algebras: 
 \begin{equation}\label{eq2.4}
  \mathscr{A}_S\to\mathscr{A}_B\otimes\mathscr{K}.
 \end{equation}
\end{definition}

\medskip
The geometry of avatars  is described in the following result. 
\begin{theorem}\label{thm2.7}
{\bf (\cite{Nik1})}
Let $\left({a,b\over K}\right)$ be a quaternion algebra, where $K\subset\overline{\mathbf{Q}}$. 
 Then:

\medskip
(i) projective plane $P^2(\mathbf{C}) $ is an avatar of the rational quaternion algebra $\mathbb{H}(\mathbf{Q})$;

\smallskip
(ii)  the avatar of a quaternion algebra  $\left({a,b\over K}\right)$ is a non-singular
algebraic surface  $S(\overline{\mathbf{Q}})$;

\smallskip
(iii)  the field extension $\mathbf{Q}\subset K$ defines a covering
$S(\overline{\mathbf{Q}})\to P^2(\mathbf{C})$ ramified at 
  three  knotted two-dimensional spheres
$P^1(\mathbf{C})\cup P^1(\mathbf{C})\cup P^1(\mathbf{C})$.
\end{theorem}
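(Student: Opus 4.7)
The plan is to convert the algebraic principalization property encoded in Definition \ref{dfn1.1} into the universal property of a blow-up, using the avatar dictionary of Definition \ref{def2.6}. The first step is to reduce the quaternion Hilbert class field to the classical one: by the formula $\mathscr{H}\left({a,b\over K}\right)\cong\left({a,b\over\mathscr{H}(K)}\right)$ (Lemma \ref{lm3.4}) together with the bijection of Corollary \ref{cor2.5}, passing from $B$ to $\mathscr{H}(B)$ is equivalent to passing from $K$ to its classical Hilbert class field, and Definition \ref{dfn1.1} becomes exactly the classical Principal Ideal Theorem applied to $\mathscr{H}(K)/K$.

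The second step is to translate this into the Serre $C^*$-algebraic setting. Morita equivalence between $O_B$ and $M_2(O_B)$, together with the identification $\mathscr{A}_S\cong\mathscr{A}_B\otimes\mathscr{K}$ from (\ref{eq1.1}), yields a lattice isomorphism between two-sided ideals of $O_B$ and closed two-sided ideals of $\mathscr{A}_S$. Through the construction of $\mathscr{A}_S$ from the twisted homogeneous coordinate ring $B(S,\mathcal{L},\sigma)$, such closed ideals correspond to $\sigma$-equivariant coherent ideal sheaves on $S$, and principal two-sided ideals correspond precisely to locally principal (invertible) sheaves.

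The third step applies the universal property of the blow-up. The non-trivial classes in $\mathrm{Cl}(O_K)$ yield finitely many ideal sheaves $\mathscr{I}_1,\ldots,\mathscr{I}_r$ on $S$ that fail to be locally principal at isolated points $p_1,\ldots,p_r\in S$. By Step 2, the avatar $S'$ of $\mathscr{H}(B)$ is characterized, up to isomorphism, as a non-singular surface $\pi:S'\to S$ on which every pullback $\pi^*\mathscr{I}_j$ is invertible; the universal property of blow-ups, together with the smoothness of both surfaces from Theorem \ref{thm2.7}(ii) and the Castelnuovo criterion for eliminating superfluous exceptional curves, identifies $\pi$ with the iterated point blow-up $\mathrm{Bl}_{p_1,\ldots,p_r}(S)\to S$ pictured in Figure 1. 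The main obstacle is precisely this third step: one must localize the non-principal classes of $\mathrm{Cl}(O_K)$ as ideal sheaves supported at isolated points of $S$ (rather than along divisors or other higher-dimensional subvarieties), and verify that the resulting birational modification is a sequence of point blow-ups and not a more general birational transformation. The dictionary of Step 2 also deserves care, particularly the compatibility of principalization in the noncommutative ring $O_B$ with invertibility of coherent ideal sheaves under the twisted coordinate ring construction.
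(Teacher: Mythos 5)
Your proposal does not address the statement it is supposed to prove. Theorem \ref{thm2.7} makes three assertions about the existence and geometry of avatars: (i) that $P^2(\mathbf{C})$ is an avatar of the rational quaternions $\mathbb{H}(\mathbf{Q})$; (ii) that the avatar of $\left({a,b\over K}\right)$ is a non-singular algebraic surface $S(\overline{\mathbf{Q}})$; and (iii) that the extension $\mathbf{Q}\subset K$ induces a covering $S(\overline{\mathbf{Q}})\to P^2(\mathbf{C})$ ramified over three knotted two-spheres. Your argument touches none of these. What you have written is a strategy for Theorem \ref{thm1.2} (the avatar of $\mathscr{H}(B)$ is a blow-up of the avatar of $B$): you reduce $\mathscr{H}(B)$ to $\mathscr{H}(K)$ via Lemma \ref{lm3.4}, translate principalization into invertibility of ideal sheaves through the Serre $C^*$-algebra, and invoke the universal property of blow-ups. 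None of that produces the existence of an avatar, its smoothness, or the ramified-covering structure of (iii). Worse, your third step explicitly appeals to ``the smoothness of both surfaces from Theorem \ref{thm2.7}(ii),'' so read as a proof of Theorem \ref{thm2.7} the argument is circular.

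For the record, the paper does not prove Theorem \ref{thm2.7} at all: it is imported verbatim from \cite{Nik1} and used as a black box (the accompanying remark presents part (iii) as an analogue of Belyi's theorem for surfaces). A proof would have to start from Definition \ref{def2.6}: exhibit the isomorphism $\mathscr{A}_{P^2(\mathbf{C})}\cong\mathscr{A}_{\mathbb{H}(\mathbf{Q})}\otimes\mathscr{K}$ for (i), and analyze how the self-adjoint representation of $M_2(O_B)$ and the resulting Serre $C^*$-algebra vary with the base field $K$ to obtain (ii) and (iii) --- machinery entirely absent from your proposal. If your text was intended for Theorem \ref{thm1.2}, it should be evaluated against Section 3 of the paper instead, where the route is quite different (a functor from $4$-manifolds to cyclic division algebras and the results of \cite{N}), but as a proof of Theorem \ref{thm2.7} it is simply not a proof of the right statement.
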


\begin{remark}
Theorem \ref{thm2.7} (iii) can be viewed as an analog of Belyi's Theorem for the algebraic surfaces.
\end{remark}

\subsection{4-manifolds and cyclic division algebras}
Let $K$ be a number field and let $E$ be a finite Galois extension of $K$. 
Denote by $G=Gal ~(E|K)$ the Galois group of $E$ over $K$. 
Let $n=\dim_K (E)$ be the dimension of $E$ as a vector space over $K$. Consider 
the ring $End_K(E)$ of all $K$-linear transformations of $E$. Fixing a basis of $E$
over $K$, one gets an isomorphism
$End_K(E)\cong M_n(K)$. 
Denote by $\mathcal{C}\subset End_K(E)$ a subring generated by multiplications
by the elements $\alpha\in E$ and the automorphisms $\theta\in G$. 
It can be verified directly, that the commutation relation between the two is given by the 
formula $\theta\alpha=\theta(\alpha)\theta$. 
Further we restrict to the case when $G\cong \left(\mathbf{Z}/n\mathbf{Z}\right)^{\times}$ is a cyclic group of order $n$ 
generated by $\theta$.  Thus the relation $\theta\alpha=\theta(\alpha)\theta$   is complemented by the
 relation $\theta^n=1$.  On the other hand, it is easy to see that $\theta$ is an invertible element of $\mathcal{C}$ 
 along with any element of the form $\gamma\theta$, where $\gamma\in E$. Notice that 
$(\gamma\theta)^n=N(\gamma)\theta^n=N(\gamma)$, 
where $N(\gamma)\in K^{\times}$ is the $K$-norm of the algebraic number $\gamma$.   
\begin{definition}
The cyclic algebra $\mathcal{C}(a)$ is a subring of the ring $M_n(K)$ 
generated by  the elements $\alpha\in E$ and the element $u:=\gamma\theta$ satisfying the relations
$u\alpha=\theta(\alpha)u, \quad u^n=a\in K^{\times}$. 
\end{definition}
\begin{example}
Let $K\cong \mathbf{R}$ and $E\cong\mathbf{C}$. 
Then $G\cong\left(\mathbf{Z}/2\mathbf{Z}\right)^{\times}$ 
and  $\theta$ is the complex conjugation. In this case 
$\mathcal{C}(1)\cong M_2(\mathbf{R})$ and $\mathcal{C}(-1)\cong\mathbb{H}$,
where $\mathbb{H}$ is the algebra of real quaternions. 
\end{example}
The cyclic division algebras appear  in the framework of arithmetic topology \cite[Section 7.2]{N}.
Namely,  let  $\mathcal{M}$ be a  smooth 4-dimensional manifold and $\mathfrak{M}$ be the category
of such  manifolds so that the  arrows of $\mathfrak{M}$ are diffeomorphisms between the objects $\mathcal{M}$.
\begin{theorem}
{\bf  \cite[Theorem 7.4.5]{N}}
There exists a covariant functor $F:  \mathfrak{M}\to \mathfrak{C}$, where
$\mathfrak{C}$ is a category of the cyclic 
division algebras. 
\end{theorem}

\section{Proof}
For the sake of clarity, let us outline the main ideas.  Let $\left({a,b\over K}\right)$
be a quaternion algebra over the number field $K$ and let $S$ be its avatar. 
We identify $S$ with a smooth 4-dimensional manifold $\mathcal{M}$. 
Recall that there exists a functor $F$  \cite[Theorem 7.4.5 and Section 7.2]{N} mapping such manifolds to the cyclic
division algebras over a number field $K'$ \cite[Section 7.4.2]{N}.  Since 
 $\left({a,b\over K}\right)\subset M_4(K)$ \cite[Exercise 11, p. 32]{V}, we conclude that
 $K'\cong K$. 
 On the other hand,   the $K$-theory of  Etesi $C^*$-algebra $\mathbb{E}_{\mathcal{M}}$ of 
 $\mathcal{M}$ \cite[Section 7.5]{N}  is defined by  the number field $K$, see  
  \cite[Corollary 7.5.1(i)]{N}. 
   It remains to apply \cite[Theorem 8.6.1]{N} saying that 
 a blow-up of $\mathcal{M}$ corresponds to the Hilbert class field $\mathscr{H}(K)$ . 
 Let us pass to  a detailed argument.
We split the proof in a series of lemmas.

\medskip
\begin{lemma}\label{lm3.1}
If $S$ is an avatar of the quaternion algebra  $\left({a,b\over K}\right)$,  then
$F(S)$ is a cyclic  division algebra over $K$. 
\end{lemma}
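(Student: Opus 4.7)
The plan is to proceed exactly along the lines sketched in the opening paragraph of Section 3. First I would pass from the avatar $S$ to its underlying smooth real $4$-manifold $\mathcal{M}$; this is legitimate because by Theorem \ref{thm2.7}(ii) the avatar is a non-singular complex algebraic surface, and every such surface carries a canonical smooth structure of real dimension four. This places us inside the source category of the functor $F$ of \cite[Theorem 7.4.5, Section 7.2]{N}.

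Next I would apply $F$ to $\mathcal{M}$; by the cited theorem, $F(\mathcal{M})$ is a cyclic division algebra over \emph{some} number field $K'$. The content of the lemma is therefore the identification $K' \cong K$. To obtain it, I would combine two ingredients. From Definition \ref{def2.6} the avatar condition supplies an isomorphism $\mathscr{A}_S \cong \mathscr{A}_B \otimes \mathscr{K}$, and since tensoring with the compacts is a Morita equivalence, the $K$-theoretic invariants of $\mathscr{A}_S$ coincide with those of $\mathscr{A}_B$. On the $B$-side, the regular representation \cite[Exercise 11, p. 32]{V} embeds $B \hookrightarrow M_4(K)$, so that $\mathscr{A}_B$ is ultimately the norm closure of a self-adjoint representation of a matrix ring over $O_K$; in particular the number field appearing in its $K$-theory is $K$.

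On the $S$-side, I would invoke \cite[Corollary 7.5.1(i)]{N}, which asserts that the $K$-theory of the Etesi $C^*$-algebra $\mathbb{E}_{\mathcal{M}}$ is determined precisely by the number field over which $F(\mathcal{M})$ is defined, namely $K'$. Matching the two descriptions of the $K$-theory then forces $K' \cong K$, and since $F(\mathcal{M})$ is already cyclic and division by construction, the lemma follows.

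The main obstacle I anticipate is the field-matching step. One must verify that the number field of scalars attached by $F$ to $\mathcal{M}$ is literally $K$ and not, for instance, a proper finite extension arising either from the doubling in $M_2(B)$ or from the compact-operator factor $\mathscr{K}$. The identification is only at the level of the center of the resulting division algebra (the algebras themselves are not isomorphic, only Morita equivalent), so one has to trace carefully through the definitions of $\mathscr{A}_B$ and $\mathbb{E}_{\mathcal{M}}$ given in \cite{N} to confirm that the canonical field of scalars is preserved on both sides. Once this bookkeeping is in place, the lemma reduces to a direct application of \cite[Theorem 7.4.5]{N} and \cite[Corollary 7.5.1(i)]{N}.
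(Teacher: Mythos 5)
Your proposal is correct and follows essentially the same route as the paper, which itself sketches exactly this strategy (pass to the underlying $4$-manifold, apply the functor $F$ of \cite[Theorem 7.4.5]{N}, and identify $K'\cong K$ via the embedding $\left({a,b\over K}\right)\subset M_4(K)$ together with \cite[Corollary 7.5.1(i)]{N}) in the opening paragraph of Section 3. The only difference is one of emphasis: the paper's formal proof of the lemma compresses the field-matching step into the observation that a non-split quaternion algebra is already a cyclic division algebra over $K$ as a central simple subring of $M_4(K)$, whereas you spell out the $K$-theoretic bookkeeping behind $K'\cong K$ that the paper relegates to its outline.
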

\begin{proof}
(i)
Let  $S$ be an algebraic surface and $\mathcal{M}$ the underlying  smooth 4-dimensional manifold. 
Recall that in the framework of arithmetic topology \cite[Section 7.2]{N} one constructs a 
covariant functor $F:  \mathfrak{M}\to \mathfrak{C}$ , where $\mathfrak{M}$ is a category
of the smooth 4-dimensional manifolds and $\mathfrak{C}$ is a category of the cyclic 
division algebras  \cite[Theorem 7.4.5]{N}.

\medskip
(ii)  A cyclic division algebra $\mathcal{C}_K\in\mathfrak{C}$ over the  number field $K$
is defined as a division subring of the matrix ring $M_n(K),$ where $n=\dim_K(E)$ for 
a Galois extension $E$ of $K$ \cite[Section 7.4.2]{N}.   
It is easy to see that the non-split (\cite[Section 5.4]{V}) quaternion algebras  $\left({a,b\over K}\right)$
are  cyclic division algebras with  $E$ being a bi-quadratic  
extension of $K$.

\medskip
(iii)
Indeed, the non-split quaternion algebra  $\left({a,b\over K}\right)$
 can be represented as a subring of the matrix ring $M_4(K)$,
see  \cite[Exercise 11, p. 32]{V}.  Since $\left({a,b\over K}\right)$ is a
central simple algebra \cite[Corollary 7.1.2]{V}, it is a cyclic division algebra
over the field $K$ \cite[Section 7.5.5]{V}. 

\medskip
Lemma \ref{lm3.1} is proved.

\end{proof}

\medskip
\begin{remark}
The avatar map $A$ can be viewed as an  inverse of the map $F_0$ which is a restriction of the functor 
$F$ to  the category of algebraic surfaces $\mathfrak{S}$. 
This relation is shown in Figure 2, where $\mathfrak{Q}$ is the category of non-split quaternion algebras. 
\end{remark}
\begin{figure}
\begin{picture}(140,100)(0,0)
\put(40,70){\vector(0,-1){35}}
\put(30,35){\vector(0,1){35}}
\put(90,70){\vector(0,-1){35}}
\put(58,20){$\subset$}
\put(55,80){$\subset$}
\put(30,20){$\mathfrak{Q}$}
\put(88,20){$\mathfrak{C}$}
\put(30,80){$\mathfrak{S}$}
\put(80,80){ $\mathfrak{M}$}

\put(100,50){$F$}
\put(45,50){$F_0$}
\put(15,50){$A$}

\end{picture}
\caption{}
\end{figure}

\medskip
\begin{lemma}\label{lm3.3}
The cyclic division algebra $\mathcal{C}_{\mathscr{H}(K)}$ 
corresponds to a blow-up of the  surface
$S$, i.e. $F(S')=\mathcal{C}_{\mathscr{H}(K)}$, where $F(S)=\mathcal{C}_K$
and $\mathscr{H}(K)$ is the Hilbert class field of $K$. 
\end{lemma}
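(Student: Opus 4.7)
The plan is to track the underlying number field $K$ through a blow-up of the avatar $S$ by means of the Etesi $C^*$-algebra machinery, and then recover the corresponding cyclic division algebra via the functor $F$. The argument proceeds in two passes: first from the algebraic surface to its smooth $4$-dimensional manifold, and then from the manifold to the arithmetic data encoded in the Etesi $C^*$-algebra.

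First I would identify $S$ with its underlying smooth $4$-manifold $\mathcal{M}$, so that Lemma \ref{lm3.1} yields $F(S)\cong\mathcal{C}_K$ and, in parallel, the $K$-theory of the Etesi $C^*$-algebra $\mathbb{E}_{\mathcal{M}}$ encodes the number field $K$ in the sense of \cite[Corollary 7.5.1(i)]{N}. If $S'\dashrightarrow S$ is a blow-up at a point $p\in S$, then on the smooth side $\mathcal{M}'$ is obtained from $\mathcal{M}$ by connected sum with $\overline{P^2(\mathbf{C})}$, inserting an exceptional rational curve of self-intersection $-1$.

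Next I would invoke \cite[Theorem 8.6.1]{N}, which matches precisely this topological operation with the replacement of the ground number field $K$ by its Hilbert class field $\mathscr{H}(K)$ in the arithmetic invariant attached to $\mathbb{E}_{\mathcal{M}'}$. Functoriality of $F$ together with the bi-quadratic construction of cyclic division algebras recalled in step (ii) of Lemma \ref{lm3.1} then forces $F(S')\cong \mathcal{C}_{\mathscr{H}(K)}$, as required.

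The main obstacle I anticipate is ensuring that Theorem 8.6.1 is invoked in the correct form: one must verify that the algebro-geometric blow-up really does correspond on the $4$-manifold level to the topological modification to which Theorem 8.6.1 applies, and that the passage through $F$ converts the arithmetic shift $K\rightsquigarrow \mathscr{H}(K)$ on the Etesi side into the claimed shift $\mathcal{C}_K\rightsquigarrow \mathcal{C}_{\mathscr{H}(K)}$ on the division algebra side (rather than, say, some other unramified abelian extension). Once this identification is secured, the equality $F(S')=\mathcal{C}_{\mathscr{H}(K)}$ follows by direct substitution.
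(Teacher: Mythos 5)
Your proposal follows essentially the same route as the paper: both pass to the underlying smooth $4$-manifold, use \cite[Corollary 7.5.1]{N} on the Etesi $C^*$-algebra to tie the arithmetic invariant (the field $K$) to the topology, and then invoke \cite[Theorem 8.6.1]{N} to identify the blow-up with the passage $K\rightsquigarrow\mathscr{H}(K)$, concluding $F(S')=\mathcal{C}_{\mathscr{H}(K)}$. The extra detail you supply (the connected-sum description of the blow-up and the explicit functoriality check for $F$) is a reasonable elaboration of the same argument rather than a different proof.
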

\begin{proof}
(i) Let $\mathcal{C}_K$ be a cyclic division algebra over the number field $K$.
Denote by  $\mathcal{M}_{top}$  a topological 4-dimensional manifold underlying 
the algebraic surface $S$.

\medskip
(ii)  Recall that the Brauer group $Br (K)$ of the number field  $K$ is an 
abelian group whose elements are Morita equivalence classes of the central
simple algebras over $K$. In particular, the group $Br (K)$ classifies the cyclic 
division algebras $\mathcal{C}_K$ over  $K$.

\medskip
(iii) On the other hand, the Brauer group $Br (K)$ is known to parametrize the smooth structures 
on  $\mathcal{M}_{top}$ \cite[Corollary 7.5.1(ii)]{N}. 
This fact is proved using the $K$-theory of operator algebras following the 
ideas of Gabor Etesi; we refer the reader to \cite[Section 7.5]{N} 
for the details and notation.  In particular, the Handelman 
triple $(\Lambda, [\mathfrak{m}], K)$ \cite[Section 3.5.2]{N} is a topological 
invariant of $\mathcal{M}_{top}$  \cite[Corollary 7.5.1(i)]{N}.

\medskip
(iv)  We can now apply \cite[Theorem 8.6.1]{N} saying that the birational map 
 $S\dashrightarrow S'$ is a blow-up if and only if $K'\cong \mathscr{H}(K)$. 
 Therefore the cyclic division algebra has the form $F(S')=\mathcal{C}_{\mathscr{H}(K)}$. 
 
 \bigskip
 Lemma \ref{lm3.3} is proved. 
\end{proof}

\medskip
\begin{lemma}\label{lm3.4}
$\mathscr{H}\left({a, b\over K}\right)\cong\left({a, b\over \mathscr{H}(K)}\right)$,
where $\mathscr{H}\left({a, b\over K}\right)$ is the Hilbert class field of the
quaternion algebra  $\left({a,b\over K}\right)$, see Definition \ref{dfn1.1}. 
\end{lemma}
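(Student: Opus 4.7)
The plan is to verify directly that $B' := \left({a,b \over \mathscr{H}(K)}\right)$ satisfies Definition \ref{dfn1.1}, which will give the desired isomorphism $\mathscr{H}(B) \cong B'$. The inclusion $B \subseteq B'$ is automatic from $K \subseteq \mathscr{H}(K)$, so the substance of the lemma is to prove that every two-sided ideal of $O_B$ becomes a principal two-sided ideal when extended to $O_{B'}$.

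The first move is to use Corollary \ref{cor2.5} on both sides to replace the noncommutative question by a commutative one. Two-sided ideals of $O_B$ correspond bijectively to ideals of $O_K$ via $I_B \mapsto I_K := I_B \cap O_K$, and two-sided ideals of $O_{B'}$ correspond to ideals of $O_{\mathscr{H}(K)}$ in the same way. By the defining property of the Hilbert class field of $K$, the extended ideal is principal: $I_K O_{\mathscr{H}(K)} = \alpha O_{\mathscr{H}(K)}$ for some $\alpha \in \mathscr{H}(K)$. Since $\alpha$ lies in the center of $B'$, the set $\alpha O_{B'} = O_{B'}\alpha$ is a principal two-sided ideal of $O_{B'}$, and under the Voight bijection for $B'$ it is the image of $\alpha O_{\mathscr{H}(K)}$.

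The hard part will be to verify the commutativity of the diagram of correspondences: one must show that the extension $I_B O_{B'}$, viewed as a two-sided ideal of $O_{B'}$, corresponds under Corollary \ref{cor2.5} to the extension $I_K O_{\mathscr{H}(K)}$. Heuristically this holds because $O_{B'}$ is obtained from $O_B$ by adjoining the scalars in $O_{\mathscr{H}(K)}$, so contraction to the center commutes with base change. Rigorously, I would factor $I_K$ into prime ideals of $O_K$, use Theorem \ref{thm2.4} to track the prime correspondence $\mathscr{P}_B \leftrightarrow \mathscr{P}_K$ prime-by-prime under extension to $\mathscr{H}(K)$, and reassemble multiplicatively. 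Once this compatibility is in hand, one concludes $I_B O_{B'} = \alpha O_{B'}$, which verifies Definition \ref{dfn1.1} for $B'$ and completes the proof.
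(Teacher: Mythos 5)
Your proposal follows essentially the same route as the paper: both reduce the statement to a commutative one via the bijection of Corollary \ref{cor2.5}, applied once to $B$ over $K$ and once to $B'=\left({a,b\over \mathscr{H}(K)}\right)$ over $\mathscr{H}(K)$, and then invoke the principalization property of the Hilbert class field $\mathscr{H}(K)$. The compatibility of the two bijections with extension of ideals, which you rightly flag as the point requiring verification, is precisely the step the paper passes over with the single word ``Likewise,'' so your sketch is, if anything, more explicit than the published argument on that point.
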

\begin{proof}
(i) Let $K$ be a number field and $\mathscr{H}(K)$ its Hilbert class field,
i.e. maximal abelian unramified extension of $K$. Recall that every ideal in
the ring of integers $O_K$ of the field $K$ is a principal ideal in the ring of integers   $O_{\mathscr{H}(K)}$
of  $\mathscr{H}(K)$.
In other words, for every ideal $I_K\subset O_K$ there exists an  element $\alpha\in \mathscr{H}(K)$
such that:  
\begin{equation}\label{eq3.1}
 I_KO_{\mathscr{H}(K)} \cong \alpha O_{\mathscr{H}(K)}. 
\end{equation}

\medskip
(ii)  Let $B=\left({a,b\over K}\right)$ be a quaternion algebra and let $O_B$ 
 be its ring of integers.   Denote by $I_B\subseteq O_B$ a two-sided 
 ideal in the ring  $O_B$.  In view of Corollary \ref{cor2.5},  there exists a
 one-to-one map $h$  between the set $\{I_B\}$ of all two-sided ideal in $O_B$
 and the set $\{I_K\}$ of all ideals in the ring $O_K$ of the ground field $K$,
 i.e. we have a bijection
\begin{equation}\label{eq3.2}
 h: \{I_B\}\to \{I_K\}. 
\end{equation}

\medskip
(iii) Let us show that $\mathscr{H}(B):=\left({a, b\over \mathscr{H}(K)}\right)$
is  the Hilbert class field of the
quaternion algebra  $\left({a,b\over K}\right)$ given by Definition \ref{dfn1.1}. 
Indeed, repeating the argument of item (ii) one gets 
a bijection:
\begin{equation}\label{eq3.3}
 \{I_{\mathscr{H}(B)}\}\to \{I_{\mathscr{H}(K)}\}. 
\end{equation}
If $I_B\subseteq O_B$ is a two-sided ideal, then $h(I_B)=I_K\subseteq O_K$. 
From  (\ref{eq3.1}) one gets that the ideal $I_K$ is a principal in 
the ring $O_{\mathscr{H}(K)}$.   
Likewise,  using (\ref{eq3.3}) we conclude that $I_B$ is a principal two-sided ideal
in the ring $O_{\mathscr{H}(B)}$.  Thus  the quaternion 
algebra  $\left({a, b\over \mathscr{H}(K)}\right)$ is the Hilbert class field of the
quaternion algebra  $\left({a,b\over K}\right)$. 

\bigskip
Lemma \ref{lm3.4} is proved.  
\end{proof}

\medskip
\begin{remark}
Using  an extension formula for the base field
$\left({a, b\over\mathscr{H}(K)}\right)
\cong \left({a, b\over K}\right)\otimes_K \mathscr{H}(K)$
\cite[p. 22]{V},  one can write the result of  Lemma \ref{lm3.4} in a tensor form: 
\begin{equation}
\mathscr{H}\left({a, b\over K}\right)
\cong \left({a, b\over K}\right)\otimes_K \mathscr{H}(K). 
\end{equation}
\end{remark}

\medskip
\begin{corollary}\label{cor3.6}
 The diagram in Figure 1 is  commutative, i.e.  
 the avatar of quaternion algebra  $\mathscr{H}(B)$ is a
blow-up of the avatar of the quaternion algebra $B:=\left({a,b\over K}\right)$. 
\end{corollary}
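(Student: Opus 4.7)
The plan is a straightforward diagram chase through Figure 1, combining Lemmas \ref{lm3.1}, \ref{lm3.3} and \ref{lm3.4} with the functorial picture $A=F_0^{-1}$ recorded in Figure 2. First I would fix $B=\left({a,b\over K}\right)$ together with its avatar $S$ at the lower-left corner of Figure 1. By Lemma \ref{lm3.1} the image $F(S)$ is a cyclic division algebra over $K$, and part (iii) of that lemma identifies it canonically with $B$ itself viewed inside the category $\mathfrak{C}$. Thus the left-hand vertical arrow of Figure 1, read upward via $F$, corresponds to the inclusion $S\mapsto B$.

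Along the bottom of the square, I would take $S'$ to be the blow-up of $S$. Lemma \ref{lm3.3} then identifies the associated cyclic division algebra as $F(S')\cong \mathcal{C}_{\mathscr{H}(K)}=\left({a,b\over \mathscr{H}(K)}\right)$. Along the top of the square, Lemma \ref{lm3.4} identifies the Hilbert class field of the quaternion algebra as $\mathscr{H}(B)\cong \left({a,b\over \mathscr{H}(K)}\right)$. Since these two outputs agree as objects of $\mathfrak{C}$, we obtain $F(S')\cong \mathscr{H}(B)$, which is exactly the commutativity of Figure 1 viewed after applying the functor $F$ to the bottom row.

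To close the square on the level of surfaces, I would apply the inverse functor $A=F_0^{-1}$ from Figure 2, obtaining $S'\cong A(\mathscr{H}(B))$ — that is, the blow-up $S'$ is an avatar of $\mathscr{H}(B)$ in the sense of Definition \ref{def2.6}. Together with the already-established $A(B)\cong S$, this is precisely the commutativity of Figure 1.

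The main obstacle is justifying this last step: one must verify that $A$ really is defined on $\mathscr{H}(B)\in \mathfrak{Q}$ and inverts $F_0$ on the nose, so that the isomorphism $F(S')\cong \mathscr{H}(B)$ lifts to an isomorphism of algebraic surfaces rather than merely to a stable (Morita) equivalence of the associated Serre $C^*$-algebras via (\ref{eq2.4}). This hinges on the two incarnations of $\left({a,b\over \mathscr{H}(K)}\right)$ — one as the cyclic division algebra produced by the geometric Lemma \ref{lm3.3}, the other as the quaternion symbol produced by the purely algebraic Lemma \ref{lm3.4} — being literally the same object of $\mathfrak{Q}\subset \mathfrak{C}$. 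Once this compatibility is granted, the corollary follows at once.
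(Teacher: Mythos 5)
Your proposal is correct and follows essentially the same route as the paper: combine Lemma \ref{lm3.4} to rewrite $\mathscr{H}(B)$ as $\left({a,b\over \mathscr{H}(K)}\right)$, then use Lemmas \ref{lm3.1} and \ref{lm3.3} to identify the avatar of that algebra with the blow-up $S'$ of $S$. Your explicit flagging of the compatibility between the two incarnations of $\left({a,b\over \mathscr{H}(K)}\right)$ and of the need for $A=F_0^{-1}$ to be defined on $\mathscr{H}(B)$ is a point the paper passes over silently, but it does not change the substance of the argument.
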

\begin{proof}
Let $S$ be an avatar of  the quaternion algebra  $\left({a,b\over K}\right)$. 
To calculate the avatar of  $\mathscr{H}\left({a, b\over K}\right)$, recall from Lemma \ref{lm3.4} that 
$\mathscr{H}\left({a, b\over K}\right)\cong\left({a, b\over \mathscr{H}(K)}\right)$. 
 On the other hand, it follows from Lemmas \ref{lm3.1} and \ref{lm3.3} that 
the avatar of quaternion algebra $\left({a, b\over \mathscr{H}(K)}\right)$
is obtained by a blow-up  $S\dashrightarrow S'$ of the  surface $S$. 
Thus the avatar of the Hilbert class field  $\mathscr{H}\left({a, b\over K}\right)$ 
 is the algebraic surface $S'$.  Corollary \ref{cor3.6} is proved.
\end{proof}

\bigskip
Theorem \ref{thm1.2} follows from  Corollary \ref{cor3.6}.

\section{Function field analogy}
It was first observed in [Dedekind \& Weber 1882] \cite{DedWeb1}
that  number fields and  fields of rational functions in one variable
are related. For example,   both fields  are   Dedekind domains, i.e.  every ideal factors 
into a product of the prime ideals.  Unlike number fields,   the group of units of the function fields
 is no longer finitely generated.  However, the behavior is back to normal for 
 the rational functions over finite (Galois)  fields. 
In fact,  it was proved in [Artin \& Whaples 1945]  \cite{ArtWap1} that 
the two fields fit into  the concept of a ``global field''. 
This led to a fast mutual progress achieved by recasting the  open 
problems on either side and solving them in terms of the other [van der Geer,  Moonen \& Schoof 2005] \cite{GMS}.
The successful proof  of an  analog of the Riemann Hypothesis (Hasse \& Weil) and the geometric Langlands Conjectures (Drinfeld \& Lafforgue)
are a few   examples.  When and if  such an analogy can be upgraded to an isomorphism between (certain) number fields and function fields 
is an interesting open problem.
Below we suggest an approach  based on Theorem \ref{thm1.2}.

\begin{figure}[h]
\begin{picture}(150,150)(0,0)

\put(70,100){$C$}

\put(55,90){\vector(-1,-1){20}}
\put(95,90){\vector(1,-1){20}}
\put(70,53){\vector(1,0){20}}

\put(75,55){$\sim$}

\put(10,50){$\mathscr{H}^{m-1}(K)$}
\put(110,50){$\mathbf{F}_q(C)$}

\end{picture}
\caption{
}
\end{figure}

Let  $P=(x_0,y_0)$ be an isolated singularity of the algebraic curve 
\begin{equation}\label{eq4.1}
 f(x,y)=0,
\end{equation}
where $f\in \mathbf{Z}[x,y]$.  Consider an arithmetic scheme: 
\begin{equation}\label{eq4.2}
 Spec ~\mathbf{Z}[x,y]/(f)
\end{equation}
and its morphism to $Spec~\mathbf{Z}$.  Let $(P)$ be a prime ideal 
of (\ref{eq4.2}) lying over $(p)\in Spec~\mathbf{Z}$. 
Recall that the resolution of singularity $(x_0,y_0)$ is a birational map  
\begin{equation}\label{eq4.3}
\phi=\epsilon_1\circ\dots\circ\epsilon_{m-1}
\end{equation}
composed of the blow-ups $\epsilon_i$  bringing   (\ref{eq4.1})
to a non-singular algebraic curve $C$.  Let $q$ be a power of $p$ and 
we denote by $\mathbf{F}_q(C)$  the geometric extension of the field  of rational functions in one variable 
 defined by the curve $C$ [Rosen 2002] \cite[Chapter 7]{R}.

On the other hand,  an analog of Theorem \ref{thm1.2} for  the algebraic curves  suggests that  each  blow-up $\epsilon_i$
is the Hilbert class field $\mathscr{H}(K)$ of the underlying number field $K$.  Thus one gets a class field 
tower:
\begin{equation}\label{eq4.4}
K\subset\mathscr{H}(K)\subset\dots\subset \mathscr{H}^{m-1}(K),
\end{equation}
where   the class number $|Cl~(\mathscr{H}^{m-1}(K))|=1$ and $\mathscr{H}^{i+1}(K):=\mathscr{H}(\mathscr{H}^i(K))$. 
Consider  a pair of the global fields $\mathscr{H}^{m-1}(K)$ and $\mathbf{F}_q(C)$   attached to 
the curve $C$,  see Figure 3.  The following conjecture says that the correspondence is a functor.

\begin{conjecture}\label{cnj4.1}
$\mathscr{H}^{m-1}(K)\cong \mathbf{F}_{q}(C),$ where  $q=p^{m-1}$.
\end{conjecture}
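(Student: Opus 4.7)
The plan is to prove the conjecture by induction on the multiplicity $m$, matching the tower of blow-ups that resolves the singularity at $(0,0)$ with the class field tower (\ref{eq4.4}), and then establishing the final isomorphism at the top of the tower via a reduction-modulo-$p$ functor.

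First, I would apply Theorem~\ref{thm1.2}, in its restriction to algebraic curves, iteratively to the sequence $\epsilon_1, \dots, \epsilon_{m-1}$ from (\ref{eq4.3}). At the $i$-th step, the blow-up $\epsilon_i$ changes the field of definition of the partially resolved curve from $\mathscr{H}^{i-1}(K)$ to $\mathscr{H}^i(K)$, so the smooth curve $C$ obtained at the end is naturally defined over $\mathscr{H}^{m-1}(K)$. Since the resolution terminates after exactly $m-1$ steps, one reads off $|Cl(\mathscr{H}^{m-1}(K))|=1$ as the obstruction to any further principalization, recovering the stated termination of (\ref{eq4.4}).

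Next, I would match this geometric tower with the tower of residue field extensions $\mathbf{F}_p \subset \mathbf{F}_{p^2} \subset \dots \subset \mathbf{F}_{p^{m-1}}=\mathbf{F}_q$ using the residue identification (\ref{eq4.2}) applied at each intermediate stage: the local ring $\mathcal{O}_i$ at the $i$-th stage has residue field $\mathbf{F}_{p^i}$, and on the function-field side this produces a parallel tower $\mathbf{F}_p(C_0) \subset \dots \subset \mathbf{F}_q(C)$ whose levels are linked to the class field tower by the residue map. To conclude, I would invoke the Artin--Whaples unification of global fields together with the functor $F$ of Lemma~\ref{lm3.1} (and its restriction $F_0$ of Figure~2): both $\mathscr{H}^{m-1}(K)$ and $\mathbf{F}_q(C)$ should correspond under $F$ to the same cyclic division algebra attached to the smooth curve $C$, and the isomorphism in Conjecture~\ref{cnj4.1} would then be read off from the equality of these two descriptions as global fields in the adelic sense.

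The main obstacle is the characteristic mismatch: $\mathscr{H}^{m-1}(K)$ has characteristic $0$ while $\mathbf{F}_q(C)$ has characteristic $p$, so the $\cong$ in the conjecture cannot be a literal ring isomorphism and must instead be interpreted functorially, for example as an equivalence of adele rings after $p$-adic completion, or as an isomorphism of zeta functions in the Hasse--Weil sense. Making this precise will likely require a lifting construction in the opposite direction to reduction mod $p$ (Witt vectors of $\mathbf{F}_q$, or $p$-adic analytic continuation of $\mathbf{F}_q(C)$), together with a verification that the lift actually lands in $\mathscr{H}^{m-1}(K)$ and not in some larger unramified extension; this last point is where I expect the real difficulty, and it may ultimately force a restriction on the admissible singularity types of $f$ in (\ref{eq4.1}).
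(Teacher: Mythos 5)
The statement you are trying to prove is labelled a \emph{conjecture} in the paper, and the paper offers no proof of it whatsoever: Section 4 only sets up the two towers (\ref{eq4.3}) and (\ref{eq4.4}), attaches the pair $\left(\mathscr{H}^{m-1}(K), \mathbf{F}_q(C)\right)$ to the curve $C$, and then \emph{asserts} the correspondence as an open problem. So there is no argument in the paper to compare yours against, and any "proof" you produce is necessarily going beyond what the author claims to know.

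Your proposal does not close the gap; it relocates it. The decisive difficulty is the one you name yourself in the last paragraph: $\mathscr{H}^{m-1}(K)$ has characteristic $0$ and $\mathbf{F}_q(C)$ has characteristic $p$, so the symbol $\cong$ cannot be a field isomorphism, and proposing to reinterpret it as "an equivalence of adele rings after $p$-adic completion" or "an isomorphism of zeta functions" is not proving the conjecture but replacing it with a different (and still unproved) statement. Several intermediate steps are also unsupported or wrong. First, the functor $F$ of Lemma~\ref{lm3.1} is defined on smooth 4-dimensional manifolds (equivalently, via $F_0$, on algebraic surfaces over number fields) with values in cyclic division algebras over number fields; nothing in the paper lets you apply $F$ to the function field $\mathbf{F}_q(C)$, so the claim that "both $\mathscr{H}^{m-1}(K)$ and $\mathbf{F}_q(C)$ correspond under $F$ to the same cyclic division algebra" has no basis. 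Second, your "parallel tower" $\mathbf{F}_p\subset\mathbf{F}_{p^2}\subset\dots\subset\mathbf{F}_{p^{m-1}}$ is not a tower of field extensions at all, since $\mathbf{F}_{p^i}\subseteq\mathbf{F}_{p^j}$ holds only when $i\mid j$; already $\mathbf{F}_{p^2}\not\subset\mathbf{F}_{p^3}$. Third, reading off $|Cl(\mathscr{H}^{m-1}(K))|=1$ from the termination of the resolution after $m-1$ blow-ups would require knowing that the class field tower of $K$ terminates at exactly the $(m-1)$-st step, which is a deep arithmetic statement (class field towers can be infinite by Golod--Shafarevich) that cannot follow from the purely local geometry of the singularity of $f$. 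Each of these would have to be repaired before your outline becomes an argument, and the characteristic-$0$ versus characteristic-$p$ obstruction suggests the conjecture itself needs a precise reformulation before a proof can even be attempted.
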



\section*{Data availability}
  
  Data sharing not applicable to this article as no datasets were generated or analyzed during the current study.
   
\section*{Conflict of interest}
On behalf of all co-authors, the corresponding author states that there is no conflict of interest.
  

\subsection*{Acknowledgment}
 The author would like to thank the anonymous referee who provided thoughtful  comments on an earlier version of the manuscript.

\bibliographystyle{amsplain}


\end{document}